\newtheorem{thm}{Theorem}[section]
\newtheorem{lem}[thm]{Lemma}
\newtheorem{cor}[thm]{Corollary}
\newtheorem{fact}[thm]{Fact}
\newtheorem*{conjecture*}{Conjecture}
\newtheorem*{thm*}{Theorem}
\theoremstyle{remark}
\newtheorem*{remark}{Remark}
\theoremstyle{definition}
\newtheorem{define}[thm]{Definition}
\newcommand{\RNum}[1]{\uppercase\expandafter{\romannumeral #1\relax}}
\newcommand{\Al}{\textup{\textsf{A}}}
\newcommand{\Sy}{\textup{\textsf{S}}}
\newcommand{\Sym}{{\operatorname{Sym}}}
\newcommand{\Stab}{{\operatorname{Stab}}}
\newskip\aline \newskip\halfaline
\title[Set Orbits of Permutation Groups]{Finite Permutation Groups with Few Orbits Under the Action on the Power Set}
\author[A. Betz]{Alexander Betz}
\author[M. Chao-Haft]{Max Chao-Haft}
\author[T. Gong]{Ting Gong}
\author[T. Keller]{Thomas Michael Keller}
\author[A. Ter-Saakov]{Anthony Ter-Saakov}
\author[Y. Yang]{Yong Yang}
\address{Department of Mathematics, Le Moyne College, 1419 Salt Springs Road, Syracuse, NY 13214}
\email{betzas@lemoyne.edu}
\address{Department of Mathematics, Harvey Mudd College, 340 East Foothill Boulevard, Claremont, CA 91711}
\email{mchaohaft@g.hmc.edu}
\address{Department of Mathematics, University of Notre Dame, 255 Hurley, Notre Dame, IN 46556}
\email{tgong@nd.edu}
\address{Department of Mathematics, Texas State University, 601 University Drive, San Marcos, TX 78666}
\email{keller@txstate.edu}
\address{Department of Mathematics $\&$ Statistics, Boston University, 111 Cummington Mall, Boston, MA 02215}
\email{antter@bu.edu}
\address{Department of Mathematics, Texas State University, 601 University Drive, San Marcos, TX 78666}
\email{yang@txstate.edu}
\begin{document}

\maketitle

\begin{abstract}
We study the orbits under the natural action of a permutation group $G \leq \Sy_n$ on the powerset $\mathscr{P}(\{1, \dots , n\})$. The permutation groups having exactly $n+1$ orbits on the powerset can be characterized as set-transitive groups (see Definition \ref{def2.2}) and were fully classified in \cite{BP55}. In this paper, we establish a general method that allows one to classify the permutation groups with $n+r$ set-orbits for a given $r$, and apply it to integers $2 \leq r \leq 15$ with the help of the computer algebra system GAP ~\cite{GAP2020}.
\end{abstract}

\section{Introduction}
Throughout the paper, we let $n\geq 2$ denote a positive integer and let $N=\{1, \dots, n\}$.\footnote{We exclude case $n=1$ as the only group action on the set \{1\} is the trivial action.}  By a \textit{permutation group on n letters} we mean a subgroup $G$ of $\Sy_n$ endowed with the natural action $(g,x) \mapsto gx \coloneqq g(x) : G \times N \to N$. We call $n$ the \emph{degree} of the permutation group $G$. The action of $G$ on $N$ induces an action of $G$ on $\mathscr{P}(N)$, given by $(g,X)\mapsto gX =\{gx: x\in X\} : G \times \mathscr{P}(N) \to \mathscr{P}(N)$. In this case, the elements being acted on are subsets of $N$. Accordingly, we shall call the orbits under this action \textit{set-orbits}. Note that for all $g \in G$ and $X \subseteq N$, $|gX|=|X|$. Since there are $n+1$ distinct sizes of subsets of $N$, it follows that there are at least $n+1$ distinct set-orbits. Additionally, this shows that all sets in the same set-orbit will have the same cardinality. A set-orbit containing sets of size $t$ will be called a $t$-\textit{set-orbit}. For a given permutation group $G$ on $n$ letters, the number of distinct $t$-set-orbits under the action of $G$ on $\mathscr{P}(N)$ will be denoted by $s_t(G)$ and the total number of set-orbits will be denoted by $s(G)$. Clearly $s(G)=\sum_{s=0}^ns_t(G)$.

\begin{define}[Beaumont and Peterson, \cite{BP55}]
Given an integer $0\leq t\leq n$, a permutation group $G$ on $n$ letters is called \emph{t-set-transitive} if for all $t$-element subsets $S,T \subseteq N$, there exists $g\in G$ such that $g S=T$.
\end{define}

In terms of the action of $G$ on $\mathscr{P}(N)$, we see that $G$ is $t$-set-transitive if and only if $s_t(G)=1$. In other words, a $t$-set-transitive group is a permutation group with exactly one $t$-set-orbit. Clearly, all permutation groups are $0$-set-transitive and $n$-set-transitive. A $1$-set-transitive permutation group is simply transitive on $N$.

\begin{define}[Beaumont and Peterson, \cite{BP55}]\label{def2.2}
A permutation group $G$ on $n$ letters is called \emph{set-transitive} if $G$ is $t$-set-transitive for all integers $0\leq t\leq n$.
\end{define}

Set-transitive groups were studied as early as 1944 by Neumann and Morgenstern \cite{MORNEU}. In \cite{BP55}, Beaumont and Peterson proved that a set-transitive permutation group on $n$ letters, with $n \notin \{5,6,9\}$, always contains the alternating group $\Al_n$. There has also been significant research devoted to bounding the number of set-orbits $s(G)$ of a degree $n$ permutation group $G$. A trivial lower bound is $s(G) \geq 2^n/|G|$. In \cite{CAMERON}, Cameron proved that if $G$ has order $\exp(o(n^{1/2}))$, then $s(G)= (2^{n}/|G|)(1 + o(1))$. In \cite{BAPYBER}, Babai and Pyber showed that if $G$ does not contain $\Al_l$ ($l>t\geq 4$) as a composition factor, then $\frac {\log_2 s(G)} n \geq \frac {c} t$ for some positive constant $c$. In the same paper, they raised the following question: what is $\inf(\frac {\log_2 s(G)} n)$ over all solvable degree $n$ permutation groups $G$? This question was answered in a recent paper by Yang in \cite{YY11}.\\
\indent On the other hand, there has been relatively little work done on the problem of classifying groups in terms of their number of set-orbits. Beaumont and Peterson successfully classified all set-transitive permutation groups in \cite{BP55}, and Kantor classified $2,3,4$-set-transitive groups which are not $2,3,4$-transitive in \cite{KAN}. These classifications lend itself to a natural generalization in the following sense. Viewed in terms on the action of $G$ on $\mathscr{P}(N)$, we see that $G$ is set-transitive if and only if $s(G)=n+1$. Our paper seeks to completely classify the permutation groups $G$ on $n$ letters satisfying $s(G)=n+r$, for small positive integers $r$. The paper is laid out in the following manner. In Sections 1 and 2, we give necessary definitions and useful facts. In Section 3, we develop a general method mimicking the strategy in \cite{BP55}. In Section 4, we exemplify the method by classifying groups with $n+2$, $n+3$, $n+4$, and $n+5$ set-orbits. Then we use GAP ~\cite{GAP2020} to calculate all such groups for $r \leq 15$. Before continuing, we state the following useful facts from \cite{BP55} and \cite{LW65}.

Given a permutation group $G$ on $n$ letters:
\begin{enumerate}
    \item If $G$ contains a $t$-set-transitive subgroup $H$, then $G$ is $t$-set-transitive.
    \item If $G$ is $t$-transitive, then $G$ is $s$-set-transitive for all positive integers $s\leq t$.
    \item The symmetric group $\Sy_n$ is set-transitive.
    \item The alternating group $\Al_n$ is set-transitive for all $n\geq 3$.
    \item If $s_t(G) = 1$ for some $2 \leq t \leq \left \lfloor \frac{n}{2} \right \rfloor$, then $G$ is primitive.\footnote{Throughout this paper, we let $\lfloor x \rfloor$ be the floor function.}
\end{enumerate}

\section{Main Theorems}

Theorem 3 in \cite{BP55} states that if $G$ is $t$-set-transitive, then $G$ is $(n-t)$-set-transitive for any integer $1\leq t\leq n-1$. It is easy to generalize this to the following observation:

\begin{lem}\label{lem3.1}
    Let $G$ be a permutation group on $n$ letters and let $0\leq t\leq n$ be an integer. Then $s_t(G)=s_{n-t}(G)$.
\end{lem}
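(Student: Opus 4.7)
The plan is to exhibit an explicit $G$-equivariant bijection between the $t$-subsets of $N$ and the $(n-t)$-subsets of $N$, then argue that it descends to a bijection of orbit sets.

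First, I would define $\phi: \mathscr{P}(N) \to \mathscr{P}(N)$ by $\phi(X) = N \setminus X$. This map restricts to a bijection from the collection of $t$-element subsets onto the collection of $(n-t)$-element subsets, and moreover is an involution, so in particular it is invertible.

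Next, the key step: I would verify that $\phi$ commutes with the $G$-action, i.e.\ that $\phi(gX) = g\,\phi(X)$ for every $g \in G$ and every $X \subseteq N$. This is where one uses that every $g \in G$ is a bijection of $N$: for such $g$ we have $g(N \setminus X) = N \setminus gX$, since $y \in g(N\setminus X)$ iff $g^{-1}y \notin X$ iff $y \notin gX$. I would write this out as a short chain of set equalities.

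As a consequence of $G$-equivariance, $\phi$ sends $G$-orbits to $G$-orbits: if $X$ and $Y$ lie in the same orbit, say $Y = gX$, then $\phi(Y) = g\,\phi(X)$, so $\phi(X)$ and $\phi(Y)$ are in the same orbit; since $\phi$ is an involution the converse holds as well. Hence $\phi$ induces a well-defined bijection between the set of $t$-set-orbits of $G$ and the set of $(n-t)$-set-orbits of $G$, which yields $s_t(G) = s_{n-t}(G)$.

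There is no real obstacle here; the only point that requires any care is the equivariance identity $g(N \setminus X) = N \setminus gX$, which I would state and justify explicitly before invoking it, since the whole argument rests on that single equality.
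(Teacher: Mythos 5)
Your proof is correct and is precisely the standard complementation argument the paper has in mind when it dismisses the lemma as ``an easy observation'': the map $X \mapsto N\setminus X$ is a $G$-equivariant involution exchanging $t$-subsets with $(n-t)$-subsets, hence induces a bijection on orbits. You have simply written out the details the paper omits, including the one equality that needs checking, $g(N\setminus X) = N\setminus gX$.
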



Note that there are $n+1$ distinct sizes of sets in $\mathscr{P}(N)$. Hence, if $n$ is odd, there are an even number of distinct set sizes, and so Lemma ~\ref{lem3.1} implies that a permutation group on $n$ letters must have an even number of set-orbits. Therefore, if $n$ is odd and a permutation group $G$ on $n$ letters has $s(G)=n+r$, then $r$ will have to be odd as well. Now consider the situation where $n$ is even and a permutation group $G$ on $n$ letters has an odd number of set-orbits. In this case, since $s_t(G)=s_{n-t}(G)$ for all $0\leq t \leq n$, it follows that $s_{n/2}(G)$ must be odd. We summarize these results in the following lemma.

\begin{lem}\label{cor3.2}
Let $G$ be a permutation group on $n$ letters and suppose $s(G)=n+r$.
\begin{enumerate}
    \item If $r$ is even, then $n$ is even.
    \item If $r$ is odd and $n$ is even, then $s_{n/2}(G)$ is odd.
\end{enumerate}
\end{lem}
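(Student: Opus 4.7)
The proof plan is to reduce both statements to a single parity computation based on Lemma 3.1. Since $s(G)=\sum_{t=0}^{n}s_t(G)$, and Lemma 3.1 gives $s_t(G)=s_{n-t}(G)$, one can pair the index $t$ with $n-t$ and exploit the resulting cancellation modulo $2$.

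First I would split into cases based on the parity of $n$. When $n$ is odd, the index set $\{0,1,\ldots,n\}$ has an even number of elements, and every index $t$ is distinct from its partner $n-t$. Pairing these up gives
\[
s(G)\;=\;\sum_{t=0}^{n}s_t(G)\;=\;2\sum_{t=0}^{(n-1)/2}s_t(G),
\]
which is even. Since $s(G)=n+r$ and $n$ is odd, $r$ must then be odd as well. Taking the contrapositive, $r$ even forces $n$ even, which is part (1).

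When $n$ is even, the partner of $t=n/2$ is itself, while all other indices pair off distinctly. Hence
\[
s(G)\;=\;s_{n/2}(G)+2\sum_{t=0}^{n/2-1}s_t(G)\;\equiv\;s_{n/2}(G)\pmod{2}.
\]
If $r$ is odd and $n$ is even, then $s(G)=n+r$ is odd, so $s_{n/2}(G)$ is odd as well, giving part (2).

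There is essentially no obstacle here: once Lemma 3.1 is in hand, the entire argument is a parity count in two cases, and the key observation is simply that $t=n/2$ is the unique self-paired index, which can only occur when $n$ is even.
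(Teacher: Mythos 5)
Your proof is correct and follows essentially the same route as the paper, which also derives both parts by pairing $s_t(G)$ with $s_{n-t}(G)$ via Lemma 3.1 and reading off the parity of $s(G)$, with $t=n/2$ as the unique self-paired index in the even case. Your version simply makes the parity bookkeeping explicit with the displayed sums.
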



\begin{thm}\label{thm2.3}[Livingstone and Wagner, \cite{LW65}]
    Given a permutation group $G$ on $n$ letters and an integer $1\leq t\leq \frac{n}{2}$, we have $s_{t-1}(G) \leq s_t(G)$.
\end{thm}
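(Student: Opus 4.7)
My plan is to prove $s_{t-1}(G) \le s_t(G)$ by constructing a $G$-equivariant linear injection between two permutation modules over $\mathbb{Q}$, and then reading off the inequality as one of dimensions of $G$-invariants.

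First, I would translate orbit counts into dimensions. Let $M_t$ be the $\mathbb{Q}$-vector space with basis indexed by the $t$-subsets of $N$, on which $G$ acts by permuting basis vectors. A vector in $M_t$ is $G$-invariant precisely when its coefficients are constant on each $G$-orbit and arbitrary across distinct orbits, so $\dim M_t^G = s_t(G)$, and likewise $\dim M_{t-1}^G = s_{t-1}(G)$.

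Next, I would define the up-shadow map $\phi \colon M_{t-1} \to M_t$ on basis vectors by
\[
\phi(A) \;=\; \sum_{\substack{A \subset B \\ |B| = t}} B,
\]
extended linearly. The containment relation is preserved by any permutation of $N$, so $\phi$ commutes with the $G$-action; in particular it restricts to a linear map $M_{t-1}^G \to M_t^G$. The desired inequality will follow at once if I can show that $\phi$ itself is injective, since any injective linear map restricts to an injection on each subspace.

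The heart of the argument, then, is to prove injectivity of $\phi$ for $t \le n/2$. My plan is to introduce the dual down-shadow $\phi^{\ast} \colon M_t \to M_{t-1}$ with $\phi^{\ast}(B) = \sum_{A \subset B,\, |A| = t-1} A$ and compute the composition. A direct double count shows
\[
\phi^{\ast}\phi(A) \;=\; (n - t + 1)\, A \;+\; \sum_{\substack{A' \ne A \\ |A \cap A'| = t - 2}} A',
\]
so $\phi^{\ast}\phi = (n - t + 1)\, I + J$, where $J$ is the adjacency operator of the Johnson graph on $(t-1)$-subsets. The spectrum of $J$ is classical, and combining with the offset $n-t+1$ yields eigenvalues of the shape $(t - j)(n - t + 1 - j)$ for $0 \le j \le t - 1$. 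Each of these factors is strictly positive exactly when $t \le n/2$, so $\phi^{\ast}\phi$ is positive-definite and in particular invertible. This forces $\phi$ to be injective, completing the proof after restriction to $G$-invariants.

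The main obstacle is this injectivity step; the rest is formal. The cleanest route is the eigenvalue calculation sketched above, and I expect the bookkeeping of the Johnson-graph spectrum to be the only genuinely non-trivial piece. A representation-theoretic alternative decomposes $M_t$ as an $S_n$-module via Young's rule and checks that $\phi$ is nonzero on each irreducible constituent of $M_{t-1}$, but it needs more machinery; a combinatorial alternative constructs an injection from $G$-orbits on $(t-1)$-subsets to $G$-orbits on $t$-subsets via Hall's marriage theorem on the bipartite orbit-containment graph, but verifying Hall's condition essentially reduces back to the same rank estimate.
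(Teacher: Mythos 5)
Your proof is correct, but note that the paper does not actually prove this statement: it is imported verbatim as Theorem 3.3 from Livingstone and Wagner \cite{LW65}, so there is no in-paper argument to compare against. What you have written is the standard self-contained linear-algebra proof, and it is essentially a modern repackaging of Livingstone--Wagner's original character-theoretic argument (they show the permutation character on $t$-subsets contains the one on $(t-1)$-subsets; your statement that the $G$-equivariant up-shadow map $\phi$ is injective, hence restricts to an injection $M_{t-1}^G \hookrightarrow M_t^G$, is the module-level form of the same fact). All the individual steps check out: $\dim M_k^G = s_k(G)$ because the orbit sums form a basis of the invariants; the double count giving $\phi^{\ast}\phi = (n-t+1)I + J$ is right (for $B = A \cup \{x\}$ the $(t-1)$-subsets of $B$ are $A$ itself and the $t-1$ sets $(A\setminus\{y\})\cup\{x\}$, each neighbour in the Johnson graph arising from a unique pair $(x,y)$); and the eigenvalues $(t-j)(n-t+1-j)$, $0 \le j \le t-1$, agree with the Johnson spectrum $(k-j)(n-k-j)-j$ for $k = t-1$ after adding the offset $n-t+1$. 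Since $\ker\phi = \ker(\phi^{\ast}\phi)$, invertibility of $\phi^{\ast}\phi$ gives injectivity. Two minor quibbles: the factors are in fact strictly positive for all $t \le (n+1)/2$, not ``exactly when $t \le n/2$,'' so your proof actually covers the slightly larger range $t \le \lceil n/2 \rceil$ relevant for odd $n$; and if you want to avoid quoting the Johnson spectrum wholesale, the cleanest route is to note that $\phi$ is $S_n$-equivariant and $M_{t-1} \cong \bigoplus_{j=0}^{t-1} S^{(n-j,j)}$ is multiplicity-free, so by Schur's lemma $\phi^{\ast}\phi$ is scalar on each constituent and one only needs each scalar to be nonzero.
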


\begin{remark}
This theorem implies that if $1\leq t <\left\lfloor \frac{n}{2} \right\rfloor - 1$ then $s_t(G) > 1$ only if $s_{t-1}(G) > 1$. We will make frequent use of this fact in our classification.
\end{remark}

\begin{lem}\label{lem3.4}
    Let $G$ be $\left (\left \lfloor\frac{n}{2} \right \rfloor+k \right)$-set-transitive for a positive integer $k$. If there exists a prime $p$ such that $\left \lfloor\frac{n}{2} \right \rfloor+k < p \leq n$, then $G$ is $(n-p+1)$-transitive.
\end{lem}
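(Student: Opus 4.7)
The plan is to use the $t$-set-transitivity of $G$ (where $t = \lfloor n/2 \rfloor + k$) to force the prime $p$ into $|G|$, to extract a $p$-cycle from $G$, to establish primitivity, and then to invoke a classical theorem of Jordan to promote primitivity plus a $p$-cycle into $(n-p+1)$-transitivity.

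First, since $k \geq 1$ one has $t \geq \lceil n/2 \rceil$, hence $t \geq n - t$; combined with the hypothesis $p > t$ this gives the chain $p > t \geq n - t$. The action of $G$ on the $t$-subsets of $N$ is transitive, so $\binom{n}{t} \mid |G|$ by orbit--stabilizer. Since $p \leq n$, $p > t$, and $p > n - t$, the prime $p$ divides $n!$ but divides neither $t!$ nor $(n-t)!$, so $p \mid \binom{n}{t}$ and hence $p \mid |G|$. Cauchy's theorem gives an element $\sigma \in G$ of order $p$, and because $p > n/2$ forbids two disjoint $p$-cycles in $S_n$, $\sigma$ must be a single $p$-cycle.

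Next, I would establish that $G$ is primitive. By Lemma 3.1, $G$ is $(n-t)$-set-transitive, and $n - t \leq \lfloor n/2 \rfloor$ (since $t \geq \lceil n/2 \rceil$). Provided $n - t \geq 2$, fact (5) from the introduction yields primitivity of $G$. I then invoke the classical theorem of Jordan (see e.g.\ Wielandt's \emph{Finite Permutation Groups}): a primitive permutation group of degree $n$ containing a cycle of length $p < n$ is $(n-p+1)$-transitive. This immediately gives the conclusion.

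The only case not covered is $n - t \leq 1$. Since $t < p \leq n$ forces $t \leq n - 1$, the remaining possibility is $t = n - 1$, which in turn forces $p = n$; then $n - p + 1 = 1$, and the required conclusion ``$G$ is $1$-transitive'' is just transitivity, which follows from $(n-1)$-set-transitivity via Lemma 3.1. I expect the main obstacle to be correctly citing Jordan's theorem for primitive groups containing a prime-length cycle; the divisibility check for $p \mid |G|$ and the passage from order $p$ to a single $p$-cycle are routine.
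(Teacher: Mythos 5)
Your proof is correct, but it takes a more self-contained route than the paper. The paper's entire proof is a one-line reduction to Theorem 7, Corollary 1 of Beaumont--Peterson \cite{BP55}: it merely verifies the hypothesis $p > \max\left(\left\lfloor\frac{n}{2}\right\rfloor+k,\ n-\left\lfloor\frac{n}{2}\right\rfloor-k\right) = \left\lfloor\frac{n}{2}\right\rfloor+k$, which is exactly your opening observation that $p > t \geq n-t$. What you have done is unpack that black box and reconstruct, essentially verbatim, the proof of the cited result: orbit--stabilizer forces $\binom{n}{t} \mid |G|$, the inequality $p > \max(t, n-t)$ forces $p \mid \binom{n}{t}$, Cauchy plus $2p > n$ yields a single $p$-cycle, set-transitivity in the range $2 \leq n-t \leq \lfloor n/2\rfloor$ yields primitivity, and Jordan's theorem finishes. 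Your handling of the degenerate case $t = n-1$ is also correct. The only cosmetic loose end is that your statement of Jordan's theorem assumes $p < n$, while in the main case one can still have $p = n$; there the conclusion is just $1$-transitivity, which is immediate from primitivity, so nothing breaks. The trade-off is the usual one: the paper's citation is shorter and leans on the reader having \cite{BP55} at hand, while your version makes the lemma independent of that reference at the cost of invoking Jordan's classical theorem directly.
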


\begin{proof}
    By Theorem 7, Corollary 1 in \cite{BP55}, it suffices to show that $p> \max \left( \left \lfloor\frac{n}{2} \right \rfloor+k, n- \left \lfloor\frac{n}{2} \right \rfloor-k \right) = \left \lfloor\frac{n}{2} \right \rfloor+k$. This is true by assumption, so we are done.
\end{proof}

\begin{lem} \label{lem3.5}
    Let $k$ be a positive integer and let $G$ be a permutation group on $n$ letters that does not contain $\Al_n$. If there exists a prime $p$ such that $\left \lfloor\frac{n}{2} \right \rfloor+k < p < \frac{2n}{3}$, then $G$ is not $\left (\left \lfloor\frac{n}{2} \right \rfloor+k \right)$-set-transitive.
\end{lem}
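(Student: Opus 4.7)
The plan is to argue by contradiction. Suppose, for the sake of contradiction, that $G$ is $(\lfloor n/2 \rfloor + k)$-set-transitive. Since the hypothesis gives $\lfloor n/2 \rfloor + k < p < 2n/3 \leq n$, Lemma \ref{lem3.4} applies directly and yields that $G$ is $(n - p + 1)$-transitive. So the whole task reduces to showing that this induced transitivity degree is too large to be compatible with the hypothesis that $A_n \not\leq G$.

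Next I would extract a lower bound on the transitivity degree from the upper bound on $p$. From $p < 2n/3$ we get $n - p + 1 > n/3 + 1$, and since $n - p + 1$ is an integer this upgrades to $n - p + 1 \geq \lfloor n/3 \rfloor + 2$. For every $n \geq 12$ this forces $n - p + 1 \geq 6$, so by the classical fact that the only $6$-transitive finite permutation groups of degree $n$ are $S_n$ and $A_n$, we must have $A_n \leq G$, contradicting the hypothesis. This disposes of the generic case without invoking the Mathieu groups.

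It remains to dispatch the small cases $2 \leq n \leq 11$ by direct inspection. For $2 \leq n \leq 10$ the admissible interval $(\lfloor n/2 \rfloor + k, 2n/3)$ contains no prime at all, so the hypothesis of the lemma is vacuous. The only substantive small case is $n = 11$, where the unique admissible choice is $p = 7$ with $k = 1$, giving induced transitivity degree $n - p + 1 = 5$; since the $5$-transitive Mathieu groups $M_{12}$ and $M_{24}$ act on $12$ and $24$ letters respectively, any $5$-transitive group of degree $11$ must contain $A_{11}$, again contradicting the hypothesis. The only obstacle I anticipate is ensuring that the Mathieu exceptions $M_{11}, M_{12}, M_{23}, M_{24}$ do not slip through; this turns out to be painless, because the primality requirement on $p$, together with the narrow interval $(\lfloor n/2 \rfloor + k, 2n/3)$, rules each exception out by elementary arithmetic, so the proof collapses to the clean application of Lemma \ref{lem3.4} followed by the classification of multiply transitive finite groups.
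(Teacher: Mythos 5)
Your proof is correct, and its skeleton is identical to the paper's: assume $(\lfloor n/2\rfloor+k)$-set-transitivity for contradiction, invoke Lemma \ref{lem3.4} to get $(n-p+1)$-transitivity, and show this degree of transitivity is incompatible with $A_n\not\leq G$. The difference lies in the final ingredient. The paper quotes Burnside's classical, elementary bound that a permutation group not containing $A_n$ is at most $(\frac{n}{3}+1)$-transitive; since $p<\frac{2n}{3}$ gives $n-p+1>\frac{n}{3}+1$ directly, the contradiction is uniform in $n$ with no case analysis. You instead sharpen $n-p+1>\frac{n}{3}+1$ to $n-p+1\geq\lfloor n/3\rfloor+2\geq 6$ for $n\geq 12$ and appeal to the classification of multiply transitive groups (only $S_n$ and $A_n$ are $6$-transitive), which forces you to check $n\leq 11$ by hand; your verification that the interval $\left(\lfloor n/2\rfloor+k,\frac{2n}{3}\right)$ contains no prime for $n\leq 10$ and only $p=7$, $k=1$ for $n=11$ (where $5$-transitivity already rules out $M_{11}$) is accurate. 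The trade-off is that your route leans on the classification of finite simple groups --- calling it a ``classical fact'' understates its cost --- where Burnside's bound is pre-CFSG and entirely elementary, and the paper's version avoids the small-case inspection altogether. Both arguments are valid; the paper's is shorter and lighter.
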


\begin{proof}
    Assume for contradiction that $G$ is $\left (\left \lfloor\frac{n}{2} \right \rfloor+k \right)$-set-transitive, then by Lemma \ref{lem3.4} such a group $G$ is $\left(n-p+1 \right)$-transitive. Since $G$ does not contain $\Al_n$, $G$ is at most $(\frac{n}{3}+1)$-transitive (see \cite[page 152]{Burnside}). Now notice that $n-p+1 > n-\frac{2n}{3} + 1$ since $p<\frac{2n}{3}$ and since  $n-\frac{2n}{3} + 1= \frac{n}{3}+1$, we have reached a contradiction and thus are done.
\end{proof}

Note that both Lemmas \ref{lem3.4} and \ref{lem3.5} hold for $k=0$ when $n$ is even. If we find a maximum $k_0$ for which there exists a prime $p$ such that $\left\lfloor\frac{n}2 \right\rfloor + k_0 < p < \frac23 n$, then $G$ cannot be $\left\lfloor\frac{n}2 \right\rfloor + k$ set-transitive for any $k \leq k_0$. If $n$ is odd and such a $k_0$ exists, then $G$ is not $\left\lfloor\frac{n}2 \right\rfloor + 1$ set-transitive. Thus it is not $\left\lfloor\frac{n}2 \right\rfloor$ set-transitive either. These results are summarized in the following corollary.

\begin{cor}\label{cor3.6}
    Let $G$ be a permutation group on $n$ letters not containing $\Al_n$, where $n$ is even (odd). Let $k_0$ be the greatest nonnegative (positive) integer such that there exists a prime $p$ with $\left\lfloor \frac{n}{2} \right\rfloor + k_0 < p < \frac{2n}{3}$. Then for all $0 \leq k \leq k_0$, $G$ is not $\left\lfloor\frac{n}2 \right\rfloor + k$ set-transitive.
\end{cor}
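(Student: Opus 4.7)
The plan is to derive Corollary \ref{cor3.6} as an essentially immediate packaging of Lemma \ref{lem3.5}, combined with the palindromic symmetry $s_t(G)=s_{n-t}(G)$ from Lemma 3.1 to handle the endpoint $k=0$ in the odd case.

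First I would make the elementary monotonicity observation: if a prime $p$ witnesses the inequality $\lfloor n/2 \rfloor + k_0 < p < 2n/3$ at $k_0$, then for every $0 \leq k \leq k_0$ we still have $\lfloor n/2\rfloor + k \leq \lfloor n/2\rfloor + k_0 < p < 2n/3$, so the same prime witnesses the inequality at $k$. Consequently, for every \emph{positive} $k$ in the range $1 \leq k \leq k_0$, Lemma \ref{lem3.5} applies verbatim and concludes that $G$ is not $(\lfloor n/2\rfloor + k)$-set-transitive.

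It remains to treat $k=0$. If $n$ is even, then $\lfloor n/2\rfloor = n/2$ and the bound in the proof of Lemma \ref{lem3.4} still reads $\max(\lfloor n/2\rfloor, n - \lfloor n/2\rfloor) = n/2$, so the proofs of Lemmas \ref{lem3.4} and \ref{lem3.5} carry over with no change when $k=0$; the assumed prime $p$ with $n/2 < p < 2n/3$ again forces a level of transitivity exceeding the Burnside bound $n/3 + 1$, contradicting the fact that $G$ does not contain $A_n$. If $n$ is odd, then by hypothesis $k_0$ is a \emph{positive} integer, so the previous paragraph already shows that $G$ is not $(\lfloor n/2\rfloor + 1)$-set-transitive. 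But when $n$ is odd we have $n - \lfloor n/2\rfloor = \lfloor n/2\rfloor + 1$, so Lemma 3.1 gives $s_{\lfloor n/2\rfloor}(G) = s_{\lfloor n/2\rfloor + 1}(G)$; hence $G$ is $\lfloor n/2\rfloor$-set-transitive if and only if it is $(\lfloor n/2\rfloor+1)$-set-transitive, and the latter has just been excluded.

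There is no genuine obstacle in this argument; the content of the corollary is simply to consolidate Lemma \ref{lem3.5} over a range of values of $k$, and the only mild subtlety is the parity bookkeeping forcing the distinction between even $k_0 \geq 0$ and odd $k_0 \geq 1$, which is exactly what the palindromic identity $s_t(G) = s_{n-t}(G)$ is designed to absorb.
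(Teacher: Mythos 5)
Your proposal is correct and follows essentially the same route as the paper: monotonicity in $k$ reduces everything to Lemma \ref{lem3.5}, the $k=0$ case for even $n$ is handled by noting the lemma's proof goes through unchanged (as the paper itself remarks just before the corollary), and the $k=0$ case for odd $n$ is handled via the symmetry $s_{\lfloor n/2\rfloor}(G)=s_{n-\lfloor n/2\rfloor}(G)=s_{\lfloor n/2\rfloor+1}(G)$. If anything, your write-up is cleaner than the paper's, which contains a typo ($\tfrac12+k$ for $\tfrac n2+k$) in the even case.
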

    \begin{proof}
        If $G$ is even, then the existence of such a nonnegative $k_0$ shows that there exists a prime $p$ such that $\frac{1}{2}+k \leq \frac{n}{2} + k_0 < p < \frac{2n}{3}$. By Lemma \ref{lem3.5}, $G$ is not $\frac{n}{2} + k$ set-transitive. For the case of $G$ being odd, a positive $k_0$ shows that there exists a prime $p$ such that $\left\lfloor\frac{n}{2} \right\rfloor+k \leq \left\lfloor  \frac{n}{2} \right\rfloor + k_0 < \frac{2n}{3} $. Since this holds at least for $k = 1$, it holds for $k = 0$ since $n - \left( \left\lfloor\frac{n}{2} \right\rfloor+1\right) = \left\lfloor\frac{n}{2} \right\rfloor$.
    \end{proof}

\begin{thm}\label{thm3.7}
    A permutation group $G$ on $n$ letters not containing $\Al_n$ is not $\left \lfloor \frac{n}{2} \right \rfloor + k$ set-transitive for any positive integer values of $k \leq k_0$ where $k_0$ is the largest integer such that $48 - \frac{n+1}{2} \leq k_0 \leq \frac{5}{54}n - \frac12$.
\end{thm}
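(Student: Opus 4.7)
The plan is to reduce the statement, via Corollary \ref{cor3.6}, to a purely arithmetic existence question about primes, and then to handle that question by a sharpening of Bertrand's postulate. Corollary \ref{cor3.6} already tells us that it is enough to exhibit a single prime $p$ with $\lfloor n/2 \rfloor + k_0 < p < 2n/3$: once such a prime is in hand, the conclusion about all positive $k \leq k_0$ follows automatically from the corollary.

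For the prime I intend to appeal to a quantitative strengthening of Nagura's theorem, namely that there is an explicit constant $X_0$ (one can take $X_0 = 48$) such that every interval $(x, 9x/8)$ with $x \geq X_0$ contains a prime. Setting $x := \lfloor n/2 \rfloor + k_0$, the upper hypothesis $k_0 \leq \tfrac{5n}{54} - \tfrac12$ rearranges to $x \leq \tfrac{16n}{27} - \tfrac12$, whence
\[
\tfrac{9x}{8} \;\leq\; \tfrac{9}{8}\Bigl(\tfrac{16n}{27} - \tfrac12\Bigr) \;=\; \tfrac{2n}{3} - \tfrac{9}{16} \;<\; \tfrac{2n}{3},
\]
so any prime supplied by the gap statement in $(x, 9x/8)$ sits automatically below $2n/3$. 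Dually, the lower hypothesis $48 - \tfrac{n+1}{2} \leq k_0$ rearranges, after a one-line parity check using $\lfloor n/2 \rfloor \geq \tfrac{n-1}{2}$, to $x \geq X_0$ (essentially $x \geq 48$ when $n$ is even and $x \geq 47$ when $n$ is odd, which can be absorbed into $X_0$). Feeding the resulting prime $p \in (x, 9x/8) \subseteq (\lfloor n/2 \rfloor + k_0, 2n/3)$ into Corollary \ref{cor3.6} then closes the proof.

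The main obstacle is the prime-gap statement itself, since the ratio $9/8$ is strictly tighter than Nagura's $6/5$ and hence does not follow from his theorem by purely elementary manipulation. One route is to cite an explicit bound on $\pi(x)$ of Ramanujan, Rosser--Schoenfeld, or Dusart type that already covers all $x \geq X_0$ in a single stroke; another is to combine such a bound in the large range with a direct finite computation (easily verified in GAP) showing that $(x, 9x/8)$ contains a prime for each small $x$ between $X_0$ and the analytic threshold. Once the prime-gap ingredient is secured, everything else in the argument is just routine algebra on the two inequalities defining $k_0$, together with the already-proved Corollary \ref{cor3.6}.
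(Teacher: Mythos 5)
Your overall strategy is the same as the paper's: reduce the theorem, via Lemma \ref{lem3.5} (equivalently Corollary \ref{cor3.6}), to exhibiting a single prime in $\left(\left\lfloor \frac{n}{2}\right\rfloor + k_0, \frac{2n}{3}\right)$, and produce that prime from the statement that $(x, \frac{9x}{8})$ contains a prime for every $x \geq 48$. That prime-gap statement is not the ``main obstacle'' you make it out to be: it is precisely the 1932 theorem of Breusch cited in the paper, so no appeal to $\pi(x)$ estimates or auxiliary computation is needed, and the two inequalities you manipulate (scaling the upper bound on $k_0$ by $\frac{9}{8}$ to land under $\frac{2n}{3}$, and using the lower bound to reach the threshold $48$) are the same routine algebra as in the paper's proof.

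There is, however, one step that fails as written: anchoring the interval at $x := \left\lfloor \frac{n}{2}\right\rfloor + k_0$ and ``absorbing'' the odd case into $X_0 = 47$. For odd $n$ your lower hypothesis only yields $x \geq 47$, and the interval $\left(47, \frac{9\cdot 47}{8}\right) = (47, 52.875)$ contains no prime at all: the integers $48,\dots,52$ are composite and $53 > 52.875$. So the $\frac{9}{8}$ prime-gap theorem genuinely requires $x \geq 48$ and cannot be pushed down to $47$. This boundary case is not hypothetical --- for $n = 81$ the theorem forces $k_0 = 7$ and $\left\lfloor \frac{n}{2}\right\rfloor + k_0 = 47$. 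The repair is exactly what the paper does: apply Breusch at $x = \frac{n+1}{2} + k_0$, which the lower hypothesis makes at least $48$ for either parity of $n$; the resulting prime still exceeds $\left\lfloor \frac{n}{2}\right\rfloor + k$ for all $k \leq k_0$ since $\frac{n+1}{2} > \left\lfloor \frac{n}{2}\right\rfloor$, and the upper bound $\frac{9}{8}\left(\frac{n+1}{2} + k_0\right) \leq \frac{2n}{3}$ still follows from $k_0 \leq \frac{5}{54}n - \frac12$.
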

    \begin{proof}
     First note that $48 \leq \frac{n+1}2 + k_0$. Due to a result by Breusch \cite{Breusch}, which states that there exists a prime between $x$ and $\frac{9x}8$ for $x \geq 48$, there exists a prime $p$ between $\frac{n+1}2 + k_0$ and $\frac9{16}n + \frac{9}8 k_0 + \frac9{16}$. Note that $p >  \left \lfloor \frac{n}2 \right \rfloor + k$ for all $k \leq k_0$. Since $k_0 \leq \frac{5}{54}n - \frac12$, we have $p < \frac9{16}n + \frac{9}8 k_0 + \frac9{16} \leq \frac{9}{16}n + \frac{9}{8}\left( \frac5{54}n - \frac12 \right) + \frac9{16} = \frac23 n.$ Thus by Lemma \ref{lem3.5}, $G$ cannot be $\left \lfloor \frac{n}{2} \right \rfloor + k$ set-transitive for any $k \leq k_0$.
    \end{proof}

\begin{remark}
For $k = 0$ the theorem still holds true for even $n$. For odd $n$, $G$ is not $\left \lfloor\frac{n}{2}\right\rfloor$ set-transitive when a positive $k_0$ exists because $G$ will not be $\left\lfloor \frac{n}{2}\right\rfloor+1$ set-transitive.
This theorem is powerful as it shows that often sets of the same size lie in different orbits. The larger $n$ is, the more set-orbits $G$ will have. The theorem can be applied to make an upper bound on the amount of letters $G$ can permute and have exactly $n+r$ set-orbits. For example, the first value of $n$ for which we get an applicable $k_0$ is $n=81$, which gives $k_0 = 7$. This implies that a permutation group $G$ on $n$ letters not containing $\Al_n$ has at least $14$ additional set-orbits, which leads to a corollary.
\end{remark}

\begin{cor}
If a permutation group $G$ on $n$ letters that does not contain $\Al_n$ has less than $n+16$ set-orbits, then $n \leq 81$.
\end{cor}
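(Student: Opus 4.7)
The plan is to argue the contrapositive: if $n \ge 82$ and $G$ does not contain $A_n$, then $s(G) \ge n + 16$. The engine is Theorem \ref{thm3.7}, which at $n = 81$ already produces $k_0 = 7$ (as the remark records), and whose admissible range $48 - \tfrac{n+1}{2} \le k_0 \le \tfrac{5}{54}n - \tfrac12$ continues to contain $7$ for every $n \ge 82$. Hence $s_{\lfloor n/2\rfloor + k}(G) \ge 2$ for all $1 \le k \le 7$, and also for $k = 0$ when $n$ is even (by the remark following Theorem \ref{thm3.7}).

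Next I would apply the symmetry $s_t(G) = s_{n-t}(G)$ from Lemma 3.1 to convert each such $k \ge 1$ into a second distinct index $n - \lfloor n/2\rfloor - k$ with $s_t(G) \ge 2$; the middle index $n/2$ is its own mirror and contributes only once when $n$ is even. Thus the number of indices $t \in \{0,\ldots,n\}$ with $s_t(G) \ge 2$ is at least $2(7)+1 = 15$ for even $n$, and at least $2(7) = 14$ for odd $n$. Since $s(G) = \sum_{t=0}^{n} s_t(G) \ge (n+1) + \#\{t : s_t(G) \ge 2\}$, we immediately obtain $s(G) \ge n + 16$ for every even $n \ge 82$, disposing of that case.

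The main obstacle is odd $n \ge 83$, where the raw count only delivers $s(G) \ge n + 15$; one must additionally rule out the parity-admissible value $s(G) = n + 15$ permitted by Lemma \ref{cor3.2}. To close this gap I would bypass Theorem \ref{thm3.7} and return to Lemma \ref{lem3.5} directly, choosing primes by hand rather than through Breusch's generic bound. For each odd $n \in \{83, 85, 87, 89, 91\}$ either $p = 53$ or $p = 59$ lies in the open interval $(\lfloor n/2\rfloor + 8,\ \tfrac{2n}{3})$, which upgrades $k_0$ to $8$; and for all odd $n \ge 93$ the upper bound in Theorem \ref{thm3.7} itself already delivers $k_0 \ge 8$ because $\tfrac{5}{54}n - \tfrac12 \ge 8$. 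In either subcase the count of indices with $s_t(G) \ge 2$ rises to $2(8) = 16$, so $s(G) \ge n + 17$, a fortiori $s(G) \ge n + 16$, which completes the argument.
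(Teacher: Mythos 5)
Your proof is correct, and it is in fact more careful than what the paper itself offers: the paper gives no formal proof of this corollary, deriving it instead from the preceding remark, which observes only that $n=81$ yields $k_0=7$ and hence ``at least $14$ additional set-orbits,'' i.e.\ $s(G)\ge n+15$. That bound is one short of what the corollary asserts for odd $n$, since $n+15$ is parity-admissible for odd $n$ by Lemma \ref{cor3.2}, and Theorem \ref{thm3.7} does not certify $k_0=8$ until $\frac{5}{54}n-\frac12\ge 8$, i.e.\ $n\ge 92$. You identified exactly this gap and closed it by going back to Lemma \ref{lem3.5} with hand-picked primes: your checks are right that $53$ lies in $\left(\lfloor n/2\rfloor+8,\ \frac{2n}{3}\right)$ for $n=83,85,87,89$ and $59$ does for $n=89,91$, which upgrades the count to $16$ doubled orbit sizes and gives $s(G)\ge n+17$ there, while for odd $n\ge 93$ Theorem \ref{thm3.7} itself supplies $k_0=8$. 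The even case is handled exactly as the paper intends ($k=0$ contributes the self-mirrored index $n/2$, giving $15$ extra orbits). So the two arguments share the same engine --- Theorem \ref{thm3.7} plus the symmetry $s_t(G)=s_{n-t}(G)$ and the count $s(G)\ge (n+1)+\#\{t: s_t(G)\ge 2\}$ --- but your version actually proves the stated inequality for all $n\ge 82$, whereas the paper's remark, read literally, only proves it for even $n$ and leaves the odd values $83\le n\le 91$ to an unstated refinement.
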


The following Lemmas are in ~\cite{BAPYBER}.

\begin{lem}  \label{lemnew0}
If $L \leq G \leq \Sym(\Omega)$, then $s(G) \leq s(L) \leq s(G) \cdot |G:L|$.
\end{lem}

\begin{lem}  \label{lemnew1}
Assume $G$ is intransitive on $\Omega$ and has orbits $\Omega_1, \dots , \Omega_m$. Let $G_i$ be the restriction of $G$ to $\Omega_i$. Then \[s(G) \geq s(G_1) \times \cdots \times s(G_m).\]
\end{lem}
\begin{proof}
Since $G \leq G_1 \times \cdots \times G_m$ we can apply Lemma ~\ref{lemnew0}. Clearly $s(G_1 \times \cdots \times G_m)=s(G_1) \times \cdots \times s(G_m)$.
\end{proof}

\begin{lem}  \label{lemnew2}
Let $G$ be a transitive permutation group acting on a set $\Omega$ where $|\Omega|=n$. Let $(\Omega_1, \dots, \Omega_m)$ denote a system of imprimitivity of $G$ with maximal block-size $b$ $(1 \leq b <n; b = 1$ if and only if $G$ is primitive; $bm=n$). Let $N$ denote the normal subgroup of $G$ stabilizing each of the blocks $\Omega_i$. Let $G_i=\Stab_G(\Omega_i)$, and denote $s=s(G_1)$. Then

\[s(G) \geq {{s+m-1} \choose s-1}.\]
\end{lem}


\section{Outline of Methods}
    In this section we will outline a step-by-step method on how we fully classify groups with $n+r$ set-orbits for $2\leq r \leq 5$ which can also be applied to classify groups with $n+r$ set-orbits for even greater $r$. To outline our method we first reduce the amount of letters $n$ on which $G$ could act, then once we have a reasonable sized list we can test a number of specific permutation groups for the remaining $n$ values. Throughout the whole method we assume that any permutation group we consider does not contain $\Al_n$ because if it did then $s(G) = n+1$. We will exemplify how to do some of the steps in the calculation sections of this paper.

    \subsection*{Step 0}
        Choose the $r$ value for which you want to classify all groups with $s(G) = n+r$. Let $k_0 =\left\lfloor \frac{r-1}{2} \right\rfloor$ and find the smallest $n$ such that $k_0$ fits in the bounds specified by Theorem \ref{thm3.7}. The smallest $n$ for which any $k_0$ appears is $n=81$ with $k_0 = 7$. This is ``Step $0$" because for any reasonable $r$, say $r<16$, we know that $n \leq 81$.
    \subsection*{Step 1}
        Now that we have an upper bound on $n$ we can start eliminating some of the possible $n$. Right away, if $r$ is even then we can eliminate all the odd $n$ by Lemma \ref{cor3.2}. Since we need $n+r$ set-orbits, we know that $G$ cannot be $s$ set-transitive for at most $r-1$ different set sizes $s$. This is where we can use Corollary \ref{cor3.6}. If $n$ is odd then we need a $k_0$ value such that there is a prime $\frac{n-1}{2}+ k_0 < p< \frac{2n}{3}$ and $2k_0> r-1$. If $n$ is even then we need a $k_0$ value such that there is a prime $\frac{n}{2} + k_0 < p < \frac{2n}{3}$ and $2k_0 + 1 > r-1$. This is because then we would know that $s_t(G) > 1$ for $2k_0$ different $s$ values in the odd case, and $2k_0+1$ different $s$ values in the even case (due to the fact that $s_\frac{n}{2}(G) = s_{n - \frac{n}{2}}(G)$). If we look at a table of primes and find any such $p$ values for the necessary $k_0$ for a given $n$, then we can remove that $n$ from the list of candidates.
    \subsection*{Step 2: Miller's Method}
        Now we look at our remaining $n$ values and apply a theorem of Miller \cite[vol. \RNum{3}, p. 439]{Miller}  which states that if $n=mp_0+r$, $p_0$ is prime, $m \in\mathbb{N}$, $p_0>m$, $r>m$, then a group $G$ on $n$ symbols, not containing $\Al_n$, cannot be more than $r$-transitive. We decompose $n$ so we have values of $m$, $p_0$, and $r$ that fit the conditions and we try to find a sufficiently small $r$. Using Lemmas \ref{lem3.4} and \ref{lem3.5}, we see if we can find a small enough $p$ such that $\left\lfloor  \frac{n}{2}\right\rfloor + k_1 < p \leq n $, then $n - p + 1 > r$ will contradict that $G$ is $\left\lfloor  \frac{n}{2}\right\rfloor + k_1$ set-transitive. For this, $k_1$ is the same as $k_0$ in the last step but it is not necessary that $\left\lfloor\frac{n}{2} \right\rfloor + k_1 < \frac{2n}{3}$. If we reach a contradiction, then we can remove that $n$ from the list.
    \subsection*{Step 3}
        After applying Miller's Method we have reduced the number of possible groups on $n$ letters that can have $n+r$ set-orbits. We will now reduce the number of groups even further using the following argument. For a given $n$ and $r$ where $r<n-4$, if $s_2(G)>1$ then it follows that $s(G) > n+r$, in which case $G$ does not have $n+r$ set-orbits. Thus, we can assume $s_2(G) = 1$ which implies that $G$ is primitive by Theorem 6 in \cite{BP55}. Similarly, if $r<n-2$, $s_1(G)>1$ implies that $s(G) > n+r$, in which case $G$ does not have $n+r$ set-orbits. By assuming $s_1(G)=1$ in this case, we know that $G$ is transitive by Theorem 5 in \cite{BP55}. In these two cases, we will use \cite{TGps} to find the structure of the transitive groups. To reduce even further, we will introduce a fact from \cite{BP55} which follows simply from the orbit stabilizer theorem.
            \begin{fact}
                If a permutation group $G$ on $n$ letters is $s$ set-transitive, then $n \choose s$ divides $|G|$.
            \end{fact}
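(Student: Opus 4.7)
The plan is to invoke the orbit–stabilizer theorem directly on the induced action of $G$ on the collection of $s$-element subsets of $N$. First I would note that $G$ acts on $\mathscr{P}(N)$ as described in the introduction, and this action restricts to an action on the set $\binom{N}{s}$ of $s$-element subsets of $N$, because $|gX|=|X|$ for every $g\in G$ and $X\subseteq N$. The cardinality of $\binom{N}{s}$ is of course $\binom{n}{s}$.

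Next, I would unpack the hypothesis: since $G$ is $s$-set-transitive, $s_s(G)=1$, so there is exactly one orbit of $G$ on $\binom{N}{s}$. Consequently, for any fixed $s$-subset $S\subseteq N$, its orbit $\{gS : g\in G\}$ coincides with all of $\binom{N}{s}$ and has size $\binom{n}{s}$. By the orbit–stabilizer theorem applied to this action,
\[
|G| \;=\; |\operatorname{orb}(S)| \cdot |\operatorname{stab}_G(S)| \;=\; \binom{n}{s}\cdot |\operatorname{stab}_G(S)|,
\]
and therefore $\binom{n}{s}$ divides $|G|$, as desired.

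There is essentially no obstacle here: the statement is a one-line consequence of the orbit–stabilizer theorem applied to the natural action of $G$ on $s$-subsets, together with the definition of $s$-set-transitivity. The only point worth being careful about is that the restricted action to $\binom{N}{s}$ is well-defined, which is immediate from the fact that $g$ permutes $N$ and thus preserves cardinalities of subsets.
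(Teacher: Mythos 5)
Your proof is correct and is exactly the argument the paper has in mind: the paper states this Fact without proof, remarking only that it ``follows simply from the orbit stabilizer theorem,'' which is precisely the orbit--stabilizer computation on the single orbit of $s$-subsets that you carry out. Nothing to add.
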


        \noindent Since we know $s_{k_1}(G) = 1$, then $n \choose {k_1}$ must divide $|G|$.  Under the above assumption, $G$ is primitive or transitive, in which case one only needs to check a few groups. If the above restriction on $r$ and $n$ does not hold, then we cannot use the fact that $s_t(G) = 1$ for any $t$. Under this situation, one must check all non-trivial subgroups of $\Sy_n$. We can use GAP to compute these cases.

    \subsection*{Step 4: Computation} At this point, we have a list of possible $n$ for the degree of $G$, and a list of possible groups for each $n$. Now we can simply run a GAP program to calculate $s(G)$ for each candidate, and list out the ones that have $s(G) = n+r$ as desired.

    \begin{remark}
        We will do this process by hand for $r = 2,3,4,5$ to exemplify the process and then use a GAP program to go up to $r = 15$. Since we are going in a linear order for $r = 2,3,4,5$, we will often run into the same group twice. An example of this is when we consider all primitive groups on eight letters such that ${8 \choose 3} = 56$ divides $|G|$, and in a later section we consider all primitive groups such that ${8 \choose 2} = 28$ divides $|G|$. We will not consider the same groups twice if we have already calculated $s(G)$, but rather just list the groups we know have $s(G) = n+r$ from previous sections and then only consider the groups whose order is divisible by $28$ but not $56$. There will be several different instances where we can use previous knowledge to reduce the possible number of groups with $n+r$ set-orbits.
    \end{remark}

\section{Groups With Few Set-Orbits}

\noindent
\textbf{Groups with $n+2$ set-orbits} Assume a group $G$ on the set $N = \lbrace 1, \ldots, n\rbrace$ has $n+2$ set-orbits.

Looking at only even $n$ and applying Step 1 with $k_0=1$, we are left with the following  possibilities: $$n = 2,4,6,8,10,12, 14,16, 24.$$
To exemplify Step 2, we will show one of the calculations done. Since $24 = 1 \times 19 + 5$, then $G$ is at most $5$-transitive. Using $k_1 = 1$  we need to find the smallest prime $p$ such that $13 < p \leq  24$, so $p = 17$. Thus, if $G$ was $13$ set-transitive, then it would be $n-p+1 = 8$-transitive, which would contradict that it is at most $5$-transitive. Thus, we know a permutation group on $24$ letters cannot have $s(G) = n+2$.

We spare the reader from having to see any more of these calculations. At the end of this method, we are left with $$n = 2, 4, 6, 8, 12.$$

Before continuing to Step 3, we take care of the trivial case of $n=2$. The only permutation groups on $2$ letters are the trivial group and $\Sy_2$. It happens that the trivial group has $s(G) = 4$, so we must include it. We will no longer consider $n=2$ for any $r$.

Now we move on to Step 3. We consider transitive groups on $4$ letters and primitive groups on $6,8,12$ letters such that $15,56,792$ divides the group orders, respectively.

Now we move on to Step 4 (computation). We show all of the groups for which $s(G) = n+2$ in the following table. For the GAP ID we let nTr be TransitiveGroup(n,r), nPr be PrimitiveGroup(n,r).  We let nSr be  ConjugacyClassesSubgroups($S_n$)$[\mathrm{r}]$.

\begin{center}
\begin{tabular}{|c|c|c|c|}
   \hline
    $n$ & $G$ & $|G|$ & GAP ID \\ \hline
     $2$& $1$ & $1$ & 2S1 \\ \hline
     $4$ & $C_4$ &$4$ & 4T1 \\ \hline
     $4$ & $D_8$ & $8$ & 4T3 \\ \hline
     6 & $\rm{PSL}$$(2,5)$ & $60$ & 6P1 \\ \hline
     $8$ & $\rm{AGL}$$(1,8)$ & $56$ & 8P1 \\ \hline
     $8$ & $\rm{A\Gamma L}$$(1,8)$ & $168$ & 8P2 \\ \hline
     $8$ & $\rm{PGL}$$(2,7)$ & $336$ & 8P5 \\ \hline
     $8$ & $\rm{ASL}$$(3,2)$ & $1344$ & 8P3 \\ \hline
   $12$ & $M_{12}$ & $95040$ & 12P2 \\ \hline
\end{tabular}
\end{center}

\noindent
\textbf{Groups with $n+3$ set-orbits} From the previous computations we know that $C_2 \times C_2$ on $4$ letters and $\rm{PSL}(2,7)$ on eight letters have $s(G) = n+3$.

For Step 1, we use a $k_0$ value of 2 for odd $n$ and 1 for even $n$. We find primes in the necessary range and are left with $$n = 3,4, \dots, 16, 19, 23, 24, 25, 43.$$

Applying Step 2, we are left with $$n = 3,4,5,6,7,8,9,11,12.$$

For $n=3$, we check all subgroups of $\Sy_3$. For $n=4,5$ we check the transitive groups. For $n=6,7,8,9,11,12$ we check primitive groups with order divisible by $15,21,56, 84,330, 792$, respectively. Note that for the even $n$ we are already done from the previous section. We show the results for all groups with $n+3$ set-orbits in a table, using the same GAP identification key as in the previous section.

\begin{center}

\begin{tabular}{|c|c|c|c|}
   \hline
    $n$ & $G$ & $|G|$ & GAP ID \\ \hline
     $3$& $C_2$ & $2$ & 3S2 \\ \hline
    $4$ & $C_2 \times C_2$ & $4$ & 4T2 \\ \hline
     $5$ & $C_5$ & $5$ & 5T1 \\ \hline
     5 & $D_{10}$ & $10$ & 5T2 \\ \hline
     $7$ & $\rm{AGL}$$(1,7)$ & $42$ & 7P4 \\ \hline
     $7$ & $\rm{PSL}$$(3,2)$ & $168$ & 7P5 \\ \hline
     $8$ & $\rm{PSL}$$(2,7)$ & 168 & 8P4 \\ \hline
      $11$& $M_{11}$ & 7920 & 11P6 \\ \hline
\end{tabular}
\end{center}

\noindent
\textbf{Groups with $n+4$ set-orbits} We know that the same numbers we were unable to remove in the earlier sections will reappear. Since for even numbers we need $k_0 = 2$, the following $n$'s can no longer be removed using Step 1: $n=18,22,34,42$. So we must continue with $$n = 4, 6,8,10,12,14,16,18,22,24,34,42.$$

After Step 2 we have only $$n = 4,6,8,10,12.$$

So we must check all subgroups of $\Sy_4$, the transitive groups on $6$ letters, and the primitive groups on $8,10,12$ letters that whose order is divisible by $28, 120, 495$. We do not recheck the primitive groups on $8$ letters divisible by 56.

The table of all groups with $s(G) = n+4$ is listed below.

\begin{center}

\begin{tabular}{|c|c|c|c|}
   \hline
    $n$ & $G$ & $|G|$ & GAP ID \\ \hline
     4& $C_3$ & $3$ & 4S4 \\ \hline
     4 & $\Sy_3$ & $6$ & 4T8 \\ \hline
     $6$ & $C_3 \times \Sy_3$ & 18 & 6T5 \\ \hline
     $6$ & $\Sy_4$ & $24$ & 6T8 \\ \hline
     $6$ & $\Sy_3 \times \Sy_3$ & 36 & 6T9 \\ \hline
     $6$& $(C_3 \times C_3) \rtimes C_4$ & 36 & 6T10 \\ \hline
     $6$ & $C_2 \times \Sy_4$ & 48 & 6T11 \\ \hline
     6 & $(C_3 \times C_3) \rtimes D_8$ & 72 & 6T13 \\ \hline
     $10$ & $\rm{PGL}$$(2,9)$ & $720$ & 10P4 \\ \hline
     $10$ & $\rm{P\Gamma L}$$(2,9)$ & $1440$ & 10P7 \\ \hline

\end{tabular}
\end{center}

\noindent
\textbf{Groups with $n+5$ set-orbits} From previous sections we have found five groups with $n+5$ set-orbits. For even $n$ we continue to use $k_0=2$, and for odd $n$ we take $k_0=3$. After Step 1 $\&$ 2 we have the same even numbers as in the above section, so $n=4,6,8,10,12$. The odd numbers after Step 1 will be the same as in the $n+3$ case, but now we must add $n=17,21,33,41$. We must apply Step 2 to the odd numbers $$n=5, 7, 9, 11, 13, 15, 17, 19, 21, 23, 25, 33, 41, 43.$$

After applying Step 2 we have $$n= 4,5,6,7,8,9,10,11,12.$$

Note that once again, we handled all the necessary computations for the even $n$ in the previous section. So we must check all subgroups of $\Sy_5$, the transitive subgroups of $\Sy_7$, and the primitive groups on $9,11$ letters whose order is divisible by $36,165$ but not $84, 330$, respectively. We list all groups with $n+5$ set-orbits in the table below.

\begin{center}

\begin{tabular}{|c|c|c|c|}
   \hline

 $n$ & $G$ & $|G|$ & GAP ID \\ \hline
     3& $1$ & $1$ & 3S1 \\ \hline
     4 & $C_2 \times C_2$ & 4 & 4S6 \\ \hline
     5 & $\Al_4$ & $12$ & 5S14 \\ \hline
     5 & $\Sy_4$ & $24$ & 5S17 \\ \hline
     $6$ & $C_2 \times \Al_4$ & 24 & 6T6 \\ \hline
     $6$ & $\Sy_4$ & $24$ & 6T7 \\ \hline
     $7$ & $C_7 \rtimes C_3$ & 21 & 7P3 \\ \hline
     $9$& $\rm{ASL}$$(2,3)$ & 216 & 9P6 \\ \hline
     $9$ & $\rm{AGL}$$(2,3)$ & 432 & 9P7 \\ \hline
     $10$ & $M_{10}$ & 720 & 10P6 \\ \hline
\end{tabular}
\end{center}

\noindent
\textbf{Remaining Computations}
We indeed classify all the cases till $r=15$ using the same method.


The cases of $12 \leq r \leq 15$ require some additional work. In each of these cases we could check all subgroups of $\Sy_n (n\leq 11)$ using GAP. Since our computers cannot use GAP to compute all subgroups of $\Sy_n (n \geq 12)$,  we will use Lemmas  \ref{lemnew1} and \ref{lemnew2} to eliminate non-transitive or imprimitive subgroups of $\Sy_n$. It's important to note that for transitive and primitive subgroups of $\Sy_n$ we can still use GAP since GAP's built in library has all primitive  groups of degree less than or equal to $4096$. We shall discuss the case when $r=12$ in detail and then list the results for $13 \leq r \leq 15$ as the process will be very similar. 

In the case $r=12$, note that we may check the number of set-orbits of all the subgroups of $\Sy_n$ ($n\leq 11$) using GAP, and we can also check all the transitive and primitive subgroups of $\Sy_n$ for $n$ not too large. We will discuss how to handle the case when $n=12$, and the other possible $n$ values can be checked in a similar way.

Assume the action of group $G$ is not transitive, then the $1$-set-orbits will be partitioned into at least 2 orbits. Assume it is partitioned into at least 3 orbits, then $s_1 \geq 3$ and by Theorem \ref{thm2.3}, we have $s_i \geq 3$ for $1\leq i \leq 11$. Thus $s(G)\geq 3\cdot 11+2 = 35 = 12+23$, and this is impossible. Then assume the $1$-set-orbits is partitioned in two, then $G$ can be a subgroup of $\Sy_1\times \Sy_{11}, \Sy_2\times \Sy_{10}, \Sy_3\times \Sy_9, \Sy_4\times \Sy_8, \Sy_5\times \Sy_7, \Sy_6\times \Sy_6$. By Lemma \ref{lemnew1} we check that in the corresponding cases, the set-orbits will be at least $24, 33, 40, 45, 48, 49$. In which case the only possibility is a subgroup of $\Sy_1\times \Sy_{11}$, and in this case, $G$ is either $\Sy_{11}$ or $\Al_{11}$.


Assume the action of group $G$ is transitive but not primitive, note that $12=2 \cdot 6$, $3\cdot 4$,  $4\cdot 3$, or $6\cdot 2$, and by applying Lemma  ~\ref{lemnew2}, we see that all the cases will lead to more set-orbits.

Assume the action of group $G$ is transitive and  primitive, we use GAP to run through all the possible primitive groups of degree $12$ and we see none of them satisfy the requirement.\\ 

We list all the results in the next few tables.

Groups with $n+6$ set-orbits:

\begin{center}
\begin{longtable}{|c|c|c|c|}
   \hline
   $n$ & $G$ & $|G|$ & GAP ID \\ \hline
    4 &   $C_{2} $ & 2 & 4S2 \\ \hline
6 &   $\Al_{4} $ & 12 & 6S31 \\ \hline
6 &   $C_{5} \rtimes C_{4} $ & 20 & 6S38 \\ \hline
6 &   $\Al_{5} $ & 60 & 6S50 \\ \hline
6 &   $\Sy_{5} $ & 120 & 6S53 \\ \hline
\end{longtable}
\end{center}

Groups with $n+7$ set-orbits:

\begin{center}
\begin{longtable}{|c|c|c|c|}
   \hline
   $n$ & $G$ & $|G|$ & GAP ID \\ \hline
    5 &   $C_{4} $ & 4 & 5S6 \\ \hline
5 &   $\Sy_{3} $ & 6 & 5S10 \\ \hline
5 &   $C_{6} $ & 6 & 5S11 \\ \hline
5 &   $D_{8} $ & 8 & 5S12 \\ \hline
5 &   $D_{12} $ & 12 & 5S15 \\ \hline
6 &   $D_{12} $ & 12 & 6S33 \\ \hline
7 &   $\Sy_{5} $ & 120 & 7S89 \\ \hline
7 &   $\Al_{6} $ & 360 & 7S93 \\ \hline
7 &   $\Sy_{6} $ & 720 & 7S94 \\ \hline
8 &   $\left( \left( \left( C_{2} \times C_{2} \times C_{2} \times C_{2} \right) \rtimes C_{2} \right) \rtimes C_{2} \right) \rtimes C_{3} $ & 192 & 8T38 \\ \hline
8 &   $\left( \left( \left( C_{2} \times C_{2} \times C_{2} \right) \rtimes \left( C_{2} \times C_{2} \right) \right) \rtimes C_{3} \right) \rtimes C_{2} $ & 192 & 8T40 \\ \hline
8 &   $\left( \left( \left( C_{2} \times C_{2} \times C_{2} \times C_{2} \right) \rtimes C_{3} \right) \rtimes C_{2} \right) \rtimes C_{3} $ & 288 & 8T42 \\ \hline
8 &   $\left( \left( \left( \left( C_{2} \times C_{2} \times C_{2} \times C_{2} \right) \rtimes C_{2} \right) \rtimes C_{2} \right) \rtimes C_{3} \right) \rtimes C_{2} $ & 384 & 8T44 \\ \hline
8 &   $\left( \left( \Al_{4} \times \Al_{4} \right) \rtimes C_{2} \right) \rtimes C_{2} $ & 576 & 8T45 \\ \hline
8 &   $\left( \Al_{4} \times \Al_{4} \right) \rtimes C_{4} $ & 576 & 8T46 \\ \hline
8 &   $\left( \Sy_{4} \times \Sy_{4} \right) \rtimes C_{2} $ & 1152 & 8T47 \\ \hline
9 &   $\left( C_{3} \times C_{3} \right) \rtimes C_{8} $ & 72 & 9T15 \\ \hline
9 &   $\left( C_{3} \times C_{3} \right) \rtimes QD_{16} $ & 144 & 9T19 \\ \hline
12 &   $M_{11} $ & 7920 & 12P1 \\ \hline
\end{longtable}
\end{center}

\newpage

Groups with $n+8$ set-orbits:

\begin{center}
\begin{longtable}{|c|c|c|c|}
   \hline
   $n$ & $G$ & $|G|$ & GAP ID \\ \hline
   4 &   $C_{2} $ & 2 & 4S3 \\ \hline
6 &   $C_{6} $ & 6 & 6S17 \\ \hline
8 &   $\left( \left( C_{2} \times C_{2} \times C_{2} \right) \rtimes \left( C_{2} \times C_{2} \right) \right) \rtimes C_{3} $ & 96 & 8S242 \\ \hline
8 &   $\left( \left( C_{2} \times C_{2} \times C_{2} \times C_{2} \right) \rtimes C_{2} \right) \rtimes C_{3} $ & 96 & 8S247 \\ \hline
8 &   $\left( \left( \left( C_{2} \times C_{2} \times C_{2} \right) \rtimes \left( C_{2} \times C_{2} \right) \right) \rtimes C_{3} \right) \rtimes C_{2} $ & 192 & 8S268 \\ \hline
8 &   $\left( \left( \left( C_{2} \times C_{2} \times C_{2} \times C_{2} \right) \rtimes C_{2} \right) \rtimes C_{3} \right) \rtimes C_{2} $ & 192 & 8S270 \\ \hline
8 &   $\Al_{7} $ & 2520 & 8S293 \\ \hline
8 &   $\Sy_{7} $ & 5040 & 8S294 \\ \hline
12 &   $\rm{PSL}$$(2,11) \rtimes C_{2} $ & 1320 & 12T218 \\ \hline
\end{longtable}
\end{center}

Groups with $n+9$ set-orbits:

\begin{center}
\begin{longtable}{|c|c|c|c|}
   \hline
   $n$ & $G$ & $|G|$ & GAP ID \\ \hline
   5 &   $C_{2} \times C_{2} $ & 4 & 5S5 \\ \hline
6 &   $C_{2} \times \Al_{4} $ & 24 & 6S40 \\ \hline
6 &   $\Sy_{4} $ & 24 & 6S41 \\ \hline
6 &   $C_{2} \times \Sy_{4} $ & 48 & 6S49 \\ \hline
7 &   $\Al_{5} $ & 60 & 7S81 \\ \hline
8 &   $\left( \left( C_{2} \times C_{2} \times C_{2} \times C_{2} \right) \rtimes C_{3} \right) \rtimes C_{2} $ & 96 & 8S240 \\ \hline
9 &   $\left( C_{3} \times C_{3} \right) \rtimes Q_{8} $ & 72 & 9S370 \\ \hline
9 &   $\Al_{8} $ & 20160 & 9S551 \\ \hline
9 &   $\Sy_{8} $ & 40320 & 9S552 \\ \hline
10 &   $\Sy_{6} $ & 720 & 10T32 \\ \hline

\end{longtable}
\end{center}

Groups with $n+10$ set-orbits:

\begin{center}
\begin{longtable}{|c|c|c|c|}
   \hline
   $n$ & $G$ & $|G|$ & GAP ID \\ \hline
6 &   $C_{5} $ & 5 & 6S14 \\ \hline
6 &   $\Sy_{3} $ & 6 & 6S16 \\ \hline
6 &   $C_{3} \times C_{3} $ & 9 & 6S28 \\ \hline
6 &   $D_{10} $ & 10 & 6S29 \\ \hline
6 &   $\left( C_{3} \times C_{3} \right) \rtimes C_{2} $ & 18 & 6S35 \\ \hline
6 &   $C_{3} \times \Sy_{3} $ & 18 & 6S37 \\ \hline
6 &   $\Sy_{3} \times \Sy_{3} $ & 36 & 6S45 \\ \hline
8 &   $\rm{GL}$$(2,3) $ & 48 & 8S216 \\ \hline
10 &   $\Al_{6} $ & 360 & 10S1396 \\ \hline
10 &   $\rm{PSL}$$\left( 2,8\right) $ & 504 & 10S1448 \\ \hline
10 &   $\rm{PSL}$$\left(2,8 \right) \rtimes C_{3} $ & 1512 & 10S1539 \\ \hline
10 &   $\Al_{9} $ & 181440 & 10S1590 \\ \hline
10 &   $\Sy_{9} $ & 362880 & 10S1591 \\ \hline
12 &   $\rm{PSL}$$(2,11) $ & 660 & 12T179 \\ \hline
\end{longtable}
\end{center}

\newpage
Groups with $n+11$ set-orbits:

\begin{center}
\begin{longtable}{|c|c|c|c|}
   \hline
   $n$ & $G$ & $|G|$ & GAP ID \\ \hline
   5 &   $C_{3} $ & 3 & 5S4 \\ \hline
5 &   $\Sy_{3} $ & 6 & 5S9 \\ \hline
7 &   $D_{14} $ & 14 & 7S48 \\ \hline
7 &   $C_{2} \times \left( C_{5} \rtimes C_{4} \right) $ & 40 & 7S75 \\ \hline
7 &   $\Sy_{5} $ & 120 & 7S87 \\ \hline
7 &   $C_{2} \times \Al_{5} $ & 120 & 7S88 \\ \hline
7 &   $C_{2} \times \Sy_{5} $ & 240 & 7S92 \\ \hline
8 &   $\rm{SL}$$(2,3) $ & 24 & 8S154 \\ \hline
9 &   $\left( C_{2} \times C_{2} \times C_{2} \right) \rtimes C_{7} $ & 56 & 9S355 \\ \hline
9 &   $\left( \left( C_{3} \times C_{3} \times C_{3} \right) \rtimes C_{3} \right) \rtimes C_{2} $ & 162 & 9S457 \\ \hline
9 &   $\left( \left( C_{3} \times C_{3} \times C_{3} \right) \rtimes C_{3} \right) \rtimes C_{2} $ & 162 & 9S458 \\ \hline
9 &   $\left( C_{2} \times C_{2} \times C_{2} \right) \rtimes \left( C_{7} \rtimes C_{3} \right) $ & 168 & 9S462 \\ \hline
9 &   $\left( \left( C_{3} \times C_{3} \times C_{3} \right) \rtimes C_{3} \right) \rtimes \left( C_{2} \times C_{2} \right) $ & 324 & 9S497 \\ \hline

9 & $\rm{PSL}$$(3,2) \rtimes C_2$ & 336 & 9S499 \\ \hline

9 &   $\left( \left( \left( C_{3} \times C_{3} \times C_{3} \right) \rtimes \left( C_{2} \times C_{2} \right) \right) \rtimes C_{3} \right) \rtimes C_{2} $ & 648 & 9S522 \\ \hline
9 &   $\left( \left( \left( C_{3} \times C_{3} \times C_{3} \right) \rtimes \left( C_{2} \times C_{2} \right) \right) \rtimes C_{3} \right) \rtimes C_{2} $ & 648 & 9S524 \\ \hline
9 &   $\left( \left( \left( \left( C_{3} \times C_{3} \times C_{3} \right) \rtimes \left( C_{2} \times C_{2} \right) \right) \rtimes C_{3} \right) \rtimes C_{2} \right) \rtimes C_{2} $ & 1296 & 9S534 \\ \hline
9 &   $\left( C_{2} \times C_{2} \times C_{2} \right) \rtimes \rm{PSL}(3,2) $ & 1344 & 9S535 \\ \hline
10 &   $\left( C_{5} \times C_{5} \right) \rtimes \left( \left( C_{4} \times C_{4} \right) \rtimes C_{2} \right) $ & 800 & 10S1496 \\ \hline
10 &   $\left( C_{2} \times C_{2} \times C_{2} \times C_{2} \right) \rtimes \Sy_{5} $ & 1920 & 10S1542 \\ \hline
10 &   $C_{2} \times \left( \left( C_{2} \times C_{2} \times C_{2} \times C_{2} \right) \rtimes \Al_{5} \right) $ & 1920 & 10S1543 \\ \hline
10 &   $C_{2} \times \left( \left( C_{2} \times C_{2} \times C_{2} \times C_{2} \right) \rtimes \Sy_{5} \right) $ & 3840 & 10S1561 \\ \hline
10 &   $\left( \Al_{5} \times \Al_{5} \right) \rtimes C_{2} $ & 7200 & 10S1569 \\ \hline
10 &   $\left( \Al_{5} \times \Al_{5} \right) \rtimes \left( C_{2} \times C_{2} \right) $ & 14400 & 10S1576 \\ \hline
10 &   $\left( \Al_{5} \times \Al_{5} \right) \rtimes C_{4} $ & 14400 & 10S1577 \\ \hline
10 &   $\left( \Al_{5} \times \Al_{5} \right) \rtimes D_{8} $ & 28800 & 10S1584 \\ \hline
11 &   $\Al_{10} $ & 1814400 & 11S3091 \\ \hline
11 &   $\Sy_{10} $ & 3628800 & 11S3092 \\ \hline

\end{longtable}
\end{center}

Groups with $n+12$ set-orbits:

\begin{center}
\begin{longtable}{|c|c|c|c|}
   \hline
   $n$ & $G$ & $|G|$ & GAP ID \\ \hline
   4&   1  & 1 &  4S1\\ \hline
    6&  $C_4 \times C_2$   & 8 &  6S24\\ \hline
    6&  $D_8$   & 8 & 6S27 \\ \hline
    6&  $D_8 \times C_2$   & 16 & 6S34 \\ \hline
    8&  $C_7 \rtimes C_6$   & 42 & 8S196 \\ \hline
    8&  $\rm{PSL}(3,2)$   & 168 &8S264  \\ \hline
    10&  $(C_5 \times C_5 )\rtimes C_8$   &200  & 10S1311  \\ \hline
    10& $(C_5 \times  C_5) \rtimes (C_8 \rtimes C_2)$&400 &10S1418 \\ \hline
    10& $(C_2 \times C_2 \times C_2 \times C_2) \rtimes \Al_5$ &960 &10S1504 \\ \hline
    10& $(C_2 \times C_2 \times C_2 \times C_2) \rtimes \Sy_5$& 1920 & 10S1541\\ \hline
    12& $\Al_{11}$& 11!/2 &  \\\hline
    12&$\Sy_{11}$&  11! & \\ \hline

\end{longtable}
\end{center}

\bigskip

Groups with $n+13$ set-orbits:

\begin{center}
\begin{longtable}{|c|c|c|c|}
   \hline
   $n$ & $G$ & $|G|$ & GAP ID \\ \hline
5&      $C_2 \times C_2$  & 4 &  5S7\\ \hline
    6&  $D_8$   & 8 &  6S26\\ \hline
    7&  $C_7$   & 7 & 7S23 \\ \hline
    7&  $C_3 \times \Sy_3$   & 18 & 7S51 \\ \hline
    7&  $C_5\rtimes C_4$   & 20 &7S53  \\ \hline
    7&  $\Sy_4$   &24  & 7S63  \\ \hline
    7& $C_3 \times  \Al_4$&36 &7S71 \\ \hline
    7& $(C_3 \times C_3) \rtimes C_4$ &36 &7S73 \\ \hline
    7& $\Sy_3 \times \Sy_3$& 36 & 7S74\\ \hline
    7& $C_2\times \Sy_4$& 48 &7S78 \\ \hline
    7&$\Al_4\times\Sy_{3}$&  72 &7S82 \\ \hline
    7&$(C_3 \times \Al_4) \rtimes C_2$&72 & 7S83 \\ \hline
    7&$C_3 \times \Sy_4$&72 & 7S84 \\ \hline
    7&$(\Sy_3 \times \Sy_3) \rtimes C_2$&72 & 7S85 \\ \hline
    7&$\Sy_4\times \Sy_3$&144 & 7S90 \\ \hline
    8&$(C_4 \times C_4)\rtimes C_2$&32 & 8S181 \\ \hline
    8&$((C_4 \times C_4)\rtimes C_2)\rtimes C_2$&64 & 8S226 \\ \hline
    8&$((C_4 \times C_4 \times C_4)\rtimes C_4)\rtimes C_2$&64 & 8S227 \\ \hline
    8&$((C_4 \times C_4 \times C_4)\rtimes C_4)\rtimes C_2$&64 & 8S228 \\ \hline
    8&$(D_8 \times D_8)\rtimes C_2$&128 & 8S259 \\ \hline
    8&$C_2\times \Sy_5$&240 & 8S272 \\ \hline
    8&$C_2\times \Al_6$&720 & 8S286 \\ \hline
    8&$\Sy_6$&720 & 8S287 \\ \hline
    8&$C_2\times \Sy_6$&1440 & 8S292 \\ \hline
    9&$C_9\rtimes C_6$&54 & 9S354 \\ \hline
    9&  $\rm{PSL}(3,2)$   & 168 &9S460  \\ \hline
    11&  $\rm{PSL}(2,11)$   & 660 &11S2754  \\ \hline
    13& $\Al_{12}$& 12!/2 & \\ \hline
    13&$\Sy_{12}$&  12! & \\ \hline
\end{longtable}
\end{center}

Groups with $n+14$ set-orbits:

\begin{center}
\begin{longtable}{|c|c|c|c|}
   \hline
   $n$ & $G$ & $|G|$ & GAP ID \\ \hline
    6& $\Sy_3$ & 6 & 6S19 \\ \hline
   6 & $\Al_4$& 12 & 6S30\\ \hline
   6 &$\Sy_4$ & 24& 6S44\\ \hline
   8 & $C_2 \times \Al_4$& 24  & 8S157\\ \hline
    8& $(C_2 \times C_2 \times C_2) \rtimes C_4$& 32 & 8S176 \\ \hline
    8 &$C_2 \times \Sy_4$ & 48 & 8S214 \\ \hline
    8 & $((C_2 \times C_2 \times C_2 \times C_2) \rtimes C_2) \rtimes C_2$ & 64 &8S229 \\ \hline
   8 &$\Sy_5$ & 120 & 8S257\\ \hline
   10 &$(C_5 \times C_5) \rtimes D_8$ & 200 &10S1305 \\ \hline
   10 & $(C_5 \times C_5) \rtimes Q_8$ & 200   &10S1307 \\ \hline
    10& $(C_5 \times C_5) \rtimes (C_4 \times C_2)$ & 200  &10S1309 \\ \hline
   10 & $((C_2 \times C_2 \times  C_2 \times  C_2) \rtimes  C_5) \rtimes  C_4$& 320  &10S1386 \\ \hline
   10 &$(C_5 \times C_5) \rtimes ((C_4 \times C_2) \rtimes C_2)$ & 400 &10S1419 \\ \hline
   10 & $C_2 \times(((C_2 \times C_2 \times C_2 \times C_2) \rtimes C_5) \rtimes C_4) $&640  &10S1472 \\ \hline
  14 &$\Al_{13}$ & $13!/2$ & \\ \hline
   14& $\Sy_{13}$& $13!$& \\ \hline

\end{longtable}
\end{center}

Groups with $n+15$ set-orbits

\begin{center}
\begin{longtable}{|c|c|c|c|}
   \hline
   $n$ & $G$ & $|G|$ & GAP ID \\ \hline
   5 &$C_2$ & 2 & 5S3 \\ \hline
   6 & $C_2 \times C_2 \times C_2$ & 8 & 6S22 \\ \hline
   6 & $D_8$ & 8  & 6S23 \\ \hline
   7 & $C_2 \times \Al_4$ & 24  & 7S60 \\ \hline
   7 & $\Sy_4$ & 24 & 7S67 \\ \hline
   7 & $\Sy_4$ & 24 & 7S70 \\ \hline
   8 & $\Sy_4$ & 24  & 8S158 \\ \hline
   9 & $(C_3 \times C_3) \rtimes C_6$ & 54 & 9S352 \\ \hline
    9 &$(C_3 \times C_3 \times C_3) \rtimes C_3$ & 81 & 9S401 \\ \hline
   9 & $((C_3 \times C_3) \rtimes C_3) \rtimes (C_2 \times C_2)$ &108  & 9S425 \\ \hline
   9 &$ ((C_3 \times C_3 \times C_3) \rtimes C_3) \rtimes C_2$ & 162 & 9S459  \\ \hline
    9 & $((C_3 \times C_3 \times C_3)\rtimes (C_2 \times C_2) \rtimes C_3 $ & 324 & 9S496 \\ \hline
    9 & $(\Sy_3 \times \Sy_3 \times \Sy_3) \rtimes C_3$ & 648 & 9S523  \\ \hline
    9&$C_2 \times \Al_7$ & 5040 & 9S548 \\ \hline
    9& $\Sy_7$ &5040  &9S549  \\ \hline
    9& $C_2 \times \Sy_7$ & 10080 & 9S550 \\ \hline
    10& $((C_2 \times C_2 \times C_2 \times C_2) \rtimes C_5) \rtimes C_4$ & 320 & 10S1385 \\ \hline
   13 & $M_{12}$&  95040 & $\star$ \\ \hline
   15 & $\Al_{14}$& $14!/2$ & \\ \hline
   15 & $\Sy_{14}$& $14!$ & \\ \hline
\end{longtable}
\end{center}

$\star$ Here $M_{12}$ acts transitively on $12$ elements (with $14$ set-orbits in this action) and trivially on the other element.

 \section{Closing Remarks}

Now that a general method is developed for calculating all groups with $n+r$ orbits where $r$ is not too large. The GAP code used for the calculation is available at ~\cite{GAPcode}. We have successfully classified all the cases for $r \leq 15$. We remark that by far the most computationally taxing step is finding all the subgroups of $\Sy_n$. If one could come up with a method to circumvent this, the classification could go much farther. Another computationally taxing step is calculating how many set-orbits a large group has. If both of these steps can be improved upon, the classification could go further.

\section*{Acknowledgements}

This research was conducted under NSF-REU grant DMS-1757233 by Betz, Chao-Haft, Gong, and Ter-Saakov during the Summer of 2019 under the supervision of Keller and Yang. The authors gratefully acknowledge the financial support of NSF and also thank Texas State University for providing a great working environment and support. Keller and Yang were also partially supported by grants from the Simons Foundation (\#280770 to Thomas M. Keller, \#499532 to Yong Yang). The authors are grateful to the referee for the valuable suggestions which greatly improved the manuscript.

\end{document}